\documentclass[12pt]{article}
\topmargin -15mm
\textwidth=16truecm
\textheight=24truecm
\oddsidemargin 5mm
\evensidemargin 5mm

\usepackage{amsmath}
\usepackage{amsfonts}
\usepackage{amsthm}
\newtheorem{theorem}{Theorem}
\newtheorem{lemma}{Lemma}
\numberwithin{equation}{section}

\title{Unique special solution for discrete Painlev\'e II}
\author{Walter Van Assche\thanks{This work is supported by FWO grant G0C9819N.} \\ KU Leuven, Belgium}
\date{\today}

\begin{document}
\maketitle

\begin{abstract}
We  show that the discrete Painlev\'e II equation with starting value
$a_{-1}=-1$ has a unique solution for which $-1 < a_n < 1$ for every $n \geq 0$. This solution corresponds
to the Verblunsky coefficients of a family of orthogonal polynomials on the unit circle. This result was already
proved for certain values of the parameter in the equation and recently a full proof was given by Duits and
Holcomb \cite{DuitsHolcomb}. In the present paper we give a different proof that is based on an idea put forward by Tomas Lasic Latimer \cite{Latimer} which uses orthogonal polynomials. We also give an upper bound for this special solution.
\end{abstract}

\section{Introduction}

The unicity of a positive solution of discrete Painlev\'e equations  has been investigated before.
\begin{itemize}
\item The discrete Painlev\'e I equation
\[    x_n(x_{n+1}+x_n+x_{n-1} +K) = \alpha n, \qquad \alpha > 0  \]
with $x_0=0$, has a unique positive solution (Lew and Quarles \cite{LewQuarles}, Nevai \cite{Nevai}). This was extended
to a more general recurrence relation by Alsulami et al. \cite{ANSVA}.
\item There is a unique  solution for the alternative discrete Painlev\'e I equation
\[    \begin{cases}
             a_n+a_{n+1}=b_n^2-t, \\
             a_n(b_n+b_{n-1}) = n,
        \end{cases}   \]
with $t>0$ and $a_0=0$, for which $a_{n+1} >0$ and $b_n>0$ for all $n \geq 0$ (Clarkson et al. \cite{CLVA}).
\item The $q$-discrete Painlev\'e equation
\[   a_n(a_{n+1}+q^{1-n}a_n+q^2a_{n-1}+q^{-2n+3}a_{n+1}a_na_{n-1}) = (1-q^n) q^{n-1}  \]
with $0 < q < 1$ and $a_0=0$, has a unique positive solution (Latimer \cite{Latimer}, where a somewhat more general
equation was investigated).  
%\item Unique solution for discrete Painlev\'e II: Van Assche \cite[\S 3.3] {WVA}, Duits and Holcomb \cite{DuitsHolcomb}.
\end{itemize}
The discrete Painlev\'e II equation that we will consider in this paper is
\begin{equation}  \label{dPII}
    x_{n+1}+x_{n-1} = \frac{\alpha  n x_n}{1-x_n^2},  
\end{equation}    
where $\alpha \in \mathbb{R}$. This is a one parameter version of d-P$(A_3^{(1)},D_5^{(1)})$ with symmetry
$A_3^{(1)}$ and surface type $D_5^{(1)}$ in the geometric classification \cite{KNY}.
We will show that there is a unique solution with $x_0=1$ for which $-1 < x_n < 1$ for
$n \geq 1$. In a similar way there is also a unique solution with $x_0=-1$ for which $-1 < x_n < 1$ for all $n \geq 1$,
which follows by taking $-x_n$ in the previous problem. This result was proved for $\alpha >1$ and $\alpha < -1$ 
in \cite[Thm. 3.10 and Cor. 3.12]{WVA}. The proof is based on a fixed point argument but this does not work for $|\alpha| \leq 1$. Recently Duits and Holcomb \cite[Thm. 2.1]{DuitsHolcomb} proved the unicity for all real $\alpha \neq 0$. Their proof used
a convexity argument showing that a related quantity has a unique minimum.
We will prove the unicity using a different approach, which was used by Latimer in \cite{Latimer} for
proving the uniqueness of a $q$-discrete Painlev\'e equation. His appoach uses orthogonal polynomials on the real line,
but we will use orthogonal polynomials on the unit circle. We will use a different version of \eqref{dPII}, with a shift 
$x_{n+1}=a_n$ and $\alpha=-2/t$. 
This corresponds better to the framework of orthogonal polynomials on the unit circle. 
Our main result is:
 
\begin{theorem}   \label{thm1}
The discrete Painlev\'e II equation
\[    a_{n+1}+a_{n-1} = \frac{-2(n+1)a_n}{t(1-a_n^2)}  \]
with initial value $a_{-1}=-1$ has a unique solution for which $-1 < a_n < 1$ for all $n \geq 0$. This
solution is obtained by taking $a_0 = I_1(t)/I_0(t)$, where $I_n(t)$ is the modified Bessel function of order $n$.
\end{theorem}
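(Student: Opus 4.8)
The plan is to recognise the special solution as the sequence of Verblunsky coefficients of an explicit measure on the unit circle, and then to prove uniqueness by running the associated ``string equation'' backwards and invoking the theory of minimal solutions of linear recurrences --- the orthogonal-polynomial mechanism behind Latimer's argument in \cite{Latimer}. For existence, consider the probability measure
\[
   d\mu_t(\theta) = \frac{1}{2\pi I_0(t)}\, e^{t\cos\theta}\, d\theta, \qquad \theta\in[-\pi,\pi),
\]
whose trigonometric moments are $c_k = \frac{1}{2\pi I_0(t)}\int_{-\pi}^{\pi} e^{-ik\theta}e^{t\cos\theta}\,d\theta = I_k(t)/I_0(t)$. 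These moments are real because $\mu_t$ is invariant under $\theta\mapsto-\theta$, so the Verblunsky coefficients $a_0,a_1,a_2,\dots$ of $\mu_t$ are real and lie in $(-1,1)$, while the standard convention $a_{-1}=-1$ makes the Szeg\H{o} recursion well posed; in particular $a_0=c_1=I_1(t)/I_0(t)$, as required. That these coefficients satisfy the stated discrete Painlev\'e II equation is the classical string equation for the weight $e^{t\cos\theta}$: differentiating the orthonormality relations and using $w'(\theta)/w(\theta)=-t\sin\theta$ yields ladder relations among the $a_n$ that collapse to the recurrence. This produces a solution with all the required properties, so only uniqueness remains.

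For uniqueness, let $\{a_n\}_{n\ge0}$ be any solution of the recurrence with $a_{-1}=-1$ and $-1<a_n<1$ for all $n$. By Verblunsky's theorem the sequence $\{a_n\}$ is the Verblunsky coefficient sequence of a unique nontrivial probability measure $\mu$ on the unit circle, with moments $c_n=\int e^{-in\theta}\,d\mu(\theta)$ satisfying $c_0=1$ and $|c_n|\le1$. The key claim is that the discrete Painlev\'e II relation forces these moments to obey the linear three-term recurrence
\[
   c_{n-1}-c_{n+1} = \frac{2n}{t}\,c_n, \qquad n\ge1,
\]
which is exactly the recurrence of the modified Bessel functions $I_n(t)$. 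This recurrence admits a one-dimensional space of minimal solutions spanned by $I_n(t)$, while any linearly independent solution grows super-exponentially in $n$; since the moments are bounded they can carry no dominant component, so $c_n=C\,I_n(t)$, and $c_0=1$ forces $C=1/I_0(t)$, whence $c_n=I_n(t)/I_0(t)$ for all $n$. Thus $\mu$ has the same moments as $\mu_t$, and because a measure on the (compact) unit circle is determined by its moments, $\mu=\mu_t$; Verblunsky's theorem then identifies $\{a_n\}$ with the coefficients constructed above. This argument is uniform in $t\neq0$, since $|I_n(t)|\le I_0(t)$ keeps the Bessel moments bounded for every real $t$.

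The step I expect to be the main obstacle is the reduction from the nonlinear equation to the linear Bessel recurrence for the moments --- the ``reverse'' string equation. Here one must express the $c_n$ in terms of the Verblunsky coefficients through the Szeg\H{o} recursion (equivalently, through the Schur or CMV data) and verify algebraically that the nonlinear relation among the $a_n$ is equivalent to the linear relation among the $c_n$, with special attention to the boundary value $a_{-1}=-1$, which fixes the normalisation and underlies the $n=1$ instance of the moment recurrence. Once this equivalence is established the work is essentially finished: the boundedness $|c_n|\le1$ together with the sharp dichotomy between the minimal and dominant solutions of the Bessel recurrence forces the moments, hence the measure, hence the Verblunsky coefficients, to be unique.
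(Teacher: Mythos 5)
Your overall strategy is exactly the paper's: identify any admissible solution with the Verblunsky coefficients of a measure on the unit circle via Verblunsky's theorem, show the moments satisfy the modified Bessel recurrence $c_{n-1}-c_{n+1}=\tfrac{2n}{t}c_n$, use $|c_n|\le 1$ together with the minimal/dominant dichotomy ($I_n$ versus $(-1)^nK_n$) to pin down $c_n=I_n(t)/I_0(t)$, and conclude by determinacy of the moment problem on the circle. The tail of your argument is correct and matches Section 3 of the paper essentially line for line.

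However, the step you yourself flag as ``the main obstacle'' --- deriving the linear moment recurrence from the nonlinear discrete Painlev\'e II relation --- is not carried out, and it is precisely where all the technical content of the paper lives. The paper needs two lemmas for this. First (Lemma \ref{lem1}), a structure relation $\Phi_n'(z)=n\Phi_{n-1}(z)+B_n\Phi_{n-2}(z)$ with $B_n=\tfrac{t}{2}(1-a_{n-2}^2)(1-a_{n-1}^2)$, proved by an induction in which the expansion $\Phi_n^*(z)=-\sum_{k=0}^n(\kappa_k^2/\kappa_n^2)a_{k-1}\Phi_k(z)$ is differentiated and the discrete Painlev\'e II relation is invoked three separate times (at indices $n-1$ and $k$) to make the coefficients of $\Phi_{n-1}$ and of each $\Phi_k$, $k\le n-2$, collapse correctly; the base case $n=2$ already requires the boundary instance $2a_0=t(1-a_0^2)(1-a_1)$ coming from $a_{-1}=-1$. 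Second (Lemma \ref{lem2}), expanding $z^n=\sum_k c_{k,n}\Phi_k(z)$, differentiating, and integrating against $\nu$ to get $n\mu_{n-1}=c_{1,n}+c_{2,n}B_2$, which after eliminating $a_0,a_1$ (again via the $n=0$ boundary relation) yields the three-term moment recurrence. Your proposal asserts that ``one must verify algebraically that the nonlinear relation among the $a_n$ is equivalent to the linear relation among the $c_n$'' but gives no mechanism for doing so; without the derivative structure relation (or an equivalent device such as a Lax pair or ladder-operator computation, actually executed), the proof is incomplete at its central point. The existence half, which you and the paper both delegate to the string-equation computation for the weight $e^{t\cos\theta}$ (Periwal--Shevitz), is fine.
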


This unique solution corresponds to a special function solution in terms of modified Bessel functions,
see e.g., \cite{RGTT}. The sequence $(a_n)_{n \in \mathbb{N}}$ contains the Verblunsky coefficients of
a family of orthogonal polynomials on the unit circle which was first investigated by Periwal and Shevitz \cite{PeriwalShevitz} and
which Ismail  calls the modified Bessel polynomials \cite[pp. 236--239]{Ismail}.
The proof of Theorem \ref{thm1} is given in Section 3 and depends on two results for the orthogonal polynomials
on the unit circle with Verblunsky coefficients given by the required solution $(a_n)_n$. These results (Lemmas \ref{lem1} and \ref{lem2}) are proved in Section 2 which also contains the necessary background about orthogonal polynomials
on the unit circle. 
In Section 4 we give an upper bound for this unique solution, which shows that it tends to zero very fast.

\section{Orthogonal polynomials on the unit circle}

We will use some basic notions from the theory of orthogonal polynomials on the unit circle (OPUC). We recommend
the books of Szeg\H{o} \cite[Chapter XI]{Szego} and Simon \cite{Simon} for the general theory.
Suppose $\nu$ is a positive measure on $[0,2\pi]$ and consider the orthonormal polynomials $\{ \varphi_n(z), n=0,1,2,\ldots\}$
on the unit circle for this measure $\nu$, for which
\[   \int_0^{2\pi} \varphi_n(z)\overline{\varphi_m(z)}\, d\nu(\theta) = \delta_{m,n}, \qquad z=e^{i\theta}, \]
with $\varphi_n(z)=\kappa_n z^n + \cdots$ and $\kappa_n>0$. We will denote the monic orthogonal polynomials
by $\Phi_n(z)=\varphi_n(z)/\kappa_n$. These monic polynomials can be computed using the Szeg\H{o}-Levinson
recurrence
\begin{equation}   \label{Srec}
   z\Phi_n(z) = \Phi_{n+1}(z)+ \overline{a_n} \Phi_n^*(z),
\end{equation}   
where $\Phi_n^*(z) = z^n \overline{\Phi}_n(1/z)$ is the reversed polynomial, with $\overline{\Phi}_n$ the polynomial
with complex conjugated coefficients. The parameters $(a_n)_{n \in \mathbb{N}}$ are called Verblunsky coefficients.
They are given by $a_n = - \overline{\Phi_{n+1}(0)}$ and they satisfy $|a_n| <1$ for $n \geq 0$, and $a_{-1}=-1$.
Verblunsky's theorem (or the spectral theorem for OPUC \cite[\S 1.7 and \S 3.1]{Simon}) says that for any sequence of complex numbers
$a_n \in \mathbb{C}$ for which $|a_n| < 1$ for $n \geq 0$, there is a unique probability measure $\nu$ on the unit circle such that the polynomials $\Phi_n$ generated by the recurrence \eqref{Srec} are the monic orthogonal polynomials for this measure
$\nu$:
\[   \int_0^{2\pi} \Phi_n(z) \overline{\Phi_m(z)}\, d\nu(\theta) = \frac{\delta_{m,n}}{\kappa_n^2},  \qquad z=e^{i\theta}.   \]
An important relation between $\kappa_n$ and $a_n$ was found by Szeg\H{o} (see \cite[Eq. (11.3.6)]{Szego}):
\[    \kappa_n^2 = \sum_{k=0}^n |\varphi_k(0)|^2, \]
from which we find that $\kappa_{n+1}^2-\kappa_n^2 = |\varphi_{n+1}(0)|^2 = \kappa_{n+1}^2 |a_n|^2$, so that
\begin{equation}   \label{kapparat}
  \frac{\kappa_n^2}{\kappa_{n+1}^2} = 1-|a_n|^2.
\end{equation}
We will be using real coefficients $a_n$ for which $-1 < a_n < 1$.

\begin{lemma}  \label{lem1}
Let $(a_n)_{n \geq -1}$ with  $a_{-1}=-1$ be a solution of 
\begin{equation}      \label{dPrec}
-\frac{t}{2}(1-a_n^2)(a_{n+1}+a_{n-1}) = (n+1)a_{n},
\end{equation}
for which $-1 < a_n <1$ for all $n \geq 0$, and let $\{ \Phi_n(z), n=0,1,2,\ldots\}$ be the monic orthogonal
polynomals on the unit circle for which
\begin{equation}  \label{Srec2}
      z\Phi_n(z) = \Phi_{n+1}(z)+ a_n \Phi_n^*(z),
\end{equation}
holds. Then
\begin{equation}  \label{Phider}
    \Phi_n'(z) = n \Phi_{n-1}(z) + B_n \Phi_{n-2}(z), \qquad n \geq 1,
 \end{equation}
 with 
 \[  B_n = \frac{t}{2} \frac{\kappa_{n-2}^2}{\kappa_n^2} = \frac{t}{2}(1-a_{n-2}^2)(1-a_{n-1}^2).  \]
 \end{lemma}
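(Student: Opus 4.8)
The plan is to prove \eqref{Phider} by induction on $n$, using only the Szeg\H{o}--Levinson recurrence \eqref{Srec2}, its reversed companion, and the discrete Painlev\'e equation \eqref{dPrec}; the underlying measure will never be needed, so the statement is really an algebraic identity in the $a_n$. First I would record that the two expressions for $B_n$ agree: applying \eqref{kapparat} twice gives $\kappa_{n-2}^2/\kappa_n^2 = (1-a_{n-2}^2)(1-a_{n-1}^2)$, so it suffices to prove \eqref{Phider} with $B_n = \tfrac{t}{2}(1-a_{n-2}^2)(1-a_{n-1}^2)$. I would also need the reversed recurrence $\Phi_{n+1}^*(z) = \Phi_n^*(z) - a_n z\Phi_n(z)$, which follows from \eqref{Srec2} for real $a_n$ by applying the $*$-operation.

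The engine of the induction is a companion formula for the derivative of the reversed polynomial,
\[ (\Phi_n^*)'(z) = -n a_{n-1}\Phi_{n-1}(z) - B_n\Phi_{n-2}^*(z), \]
which I would derive from \eqref{Phider} at level $n$ (not from \eqref{dPrec}). Differentiating $\Phi_n^*(z)=z^n\Phi_n(1/z)$ gives the identity $z(\Phi_n^*)'(z) = n\Phi_n^*(z) - z^{n-1}\Phi_n'(1/z)$, and reversing \eqref{Phider} turns $z^{n-1}\Phi_n'(1/z)$ into $n\Phi_{n-1}^* + B_n z\Phi_{n-2}^*$; the reversed recurrence then replaces $n(\Phi_n^*-\Phi_{n-1}^*)$ by $-n a_{n-1} z\Phi_{n-1}$, and dividing by $z$ yields the companion formula.

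For the inductive step I would differentiate \eqref{Srec2} to get $\Phi_{n+1}' = \Phi_n + z\Phi_n' - a_n(\Phi_n^*)'$, then substitute the induction hypothesis \eqref{Phider} and the companion formula at level $n$. Rewriting $z\Phi_{n-1}$ and $z\Phi_{n-2}$ by \eqref{Srec2} collects the outcome into $(n+1)\Phi_n$ plus three leftover terms: a multiple of $\Phi_{n-1}$, a term $na_{n-1}\Phi_{n-1}^*$, and a term $B_n(a_{n-2}+a_n)\Phi_{n-2}^*$. The base cases $n=1,2$ (where $B_1=0$ precisely because $a_{-1}=-1$) I would verify directly.

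The main obstacle, and the only place where \eqref{dPrec} enters, is showing that the two starred leftovers combine into a multiple of $\Phi_{n-1}$, so that \eqref{Phider} is reproduced at level $n+1$. Here I would invoke \eqref{dPrec} at index $n-1$ in the form $\tfrac{t}{2}(1-a_{n-1}^2)(a_{n-2}+a_n) = -na_{n-1}$, which gives $B_n(a_{n-2}+a_n) = -na_{n-1}(1-a_{n-2}^2)$; the reversed recurrence and \eqref{Srec2} then collapse $na_{n-1}\bigl[\Phi_{n-1}^* - (1-a_{n-2}^2)\Phi_{n-2}^*\bigr]$ to $-na_{n-1}a_{n-2}\Phi_{n-1}$. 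Collecting the $\Phi_{n-1}$ contributions leaves $\Phi_{n+1}' = (n+1)\Phi_n + B_{n+1}\Phi_{n-1}$ with $B_{n+1} = B_n + na_{n-1}(a_n-a_{n-2})$, and the same instance of \eqref{dPrec} shows this equals $\tfrac{t}{2}(1-a_{n-1}^2)(1-a_n^2)$, which closes the induction.
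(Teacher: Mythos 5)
Your argument is correct --- the companion formula $(\Phi_n^*)'(z) = -na_{n-1}\Phi_{n-1}(z) - B_n\Phi_{n-2}^*(z)$, the collapse of the starred leftovers $na_{n-1}\bigl[\Phi_{n-1}^*-(1-a_{n-2}^2)\Phi_{n-2}^*\bigr]$ to $-na_{n-1}a_{n-2}\Phi_{n-1}$, and the closing identity $B_n + na_{n-1}(a_n-a_{n-2}) = B_{n+1}$ all check out --- but it takes a genuinely different route from the paper's. The paper never reverses \eqref{Phider}; instead it expands $\Phi_n^*$ in the basis $\{\Phi_k\}_{k\le n}$ via Szeg\H{o}'s formula $\kappa_n^2\Phi_n^*(z) = \sum_{k=0}^n \kappa_k^2\Phi_k(z)\overline{\Phi_k(0)} = -\sum_{k=0}^n \kappa_k^2 a_{k-1}\Phi_k(z)$, differentiates $\Phi_{n+1}=z\Phi_n - a_n\Phi_n^*$ term by term, and verifies the coefficient of every $\Phi_k$ separately; this forces a strong induction (the hypothesis is applied to all $\Phi_k'$, $0\le k\le n$) and a whole family of coefficient checks for $k\le n-2$, each invoking \eqref{dPrec} both at index $n-1$ and at index $k$. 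Your version stays entirely within the two-term Szeg\H{o} recursion and its reverse, uses ordinary single-step induction, and calls on \eqref{dPrec} only at index $n-1$; the price is the auxiliary derivative formula for $\Phi_n^*$, which is however an immediate consequence of \eqref{Phider} at level $n$. The paper's expansion makes the cancellation of all lower-order terms explicit, while yours is shorter and more local. One bookkeeping point you should make explicit: the inductive step is applied only for $n\ge 2$ (so that $\Phi_{n-2}$ exists and \eqref{dPrec} is used at index $n-1\ge 1$), with $n=1,2$ verified directly and the $n=1$ instance of \eqref{Phider} read with the convention $B_1\Phi_{-1}\equiv 0$, justified by $B_1=\tfrac{t}{2}(1-a_{-1}^2)(1-a_0^2)=0$.
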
    

\begin{proof}
The recurrence \eqref{Phider} is clearly true for $n=1$. For $n=2$ we have  $\Phi_2(z) = z\Phi_1(z)-a_1\Phi_1^*(z)$
and $\Phi_1(z)=z-a_0$, so that
\[    \Phi_2(z) = z^2 - a_0(1-a_1)z-a_1.  \]
Then on one hand we find $\Phi_2'(z) = 2z -a_0(1-a_1)$ and on the other hand
\[    2\Phi_1(z)+B_2\Phi_0(z) = 2(z-a_0) + \frac{t}{2}(1-a_0^2)(1-a_1^2). \]
So \eqref{Phider} holds for $n=2$ if we can show that
\begin{equation} \label{aux}
    a_0(1-a_1)  - 2a_0 + \frac{t}{2}(1-a_0^2)(1-a_1^2) = 0.  
\end{equation}    
Use \eqref{dPrec} with $n=0$ and $a_{-1}=-1$ to find that $2a_0=t(1-a_0^2)(1-a_1)$, then \eqref{aux} easily follows.

We will prove \eqref{Phider} by induction. We can find  $\Phi_{n+1}(z)$ using the recurrence \eqref{Srec}
\[   \Phi_{n+1}(z) = z \Phi_n(z) - a_n \Phi_n^*(z).  \]
In order to expand $\Phi_n^*(z)$ in the basis $\{ \Phi_k(z), 0 \leq k \leq n\}$ we use the relation \cite[Eq. (11.3.5)]{Szego}
\[   \kappa_n \varphi_n^*(z) = \sum_{k=0}^n \varphi_k(z) \overline{\varphi_k(0)}, \]
which for the monic polynomials becomes
\[   \kappa_n^2 \Phi_n^*(z) = \sum_{k=0}^n \kappa_k^2 \Phi_k(z) \overline{\Phi_k(0)}.  \]
Recall that $a_k=-\overline{\Phi_{k+1}(0)}$, so that we find
\begin{equation}  \label{Phi*}
        \Phi_n^*(z) = -\sum_{k=0}^n \frac{\kappa_k^2}{\kappa_n^2} a_{k-1} \Phi_k(z).  
\end{equation}
This gives
\[   \Phi_{n+1}'(z) = z \Phi_n'(z) + \Phi_n(z) + a_n \sum_{k=0}^n \frac{\kappa_k^2}{\kappa_n^2} a_{k-1} \Phi_k'(z).  \]
The induction hypothesis allows us to write $\Phi_k'(z)$ for $0 \leq k \leq n$ in terms of $\Phi_{k-1}$ and $\Phi_{k-2}$
using \eqref{Phider}. This gives
\[   \Phi_{n+1}'(z) = z [ n \Phi_{n-1}(z) + B_n \Phi_{n-2}(z)] + \Phi_n(z) 
                          + a_n \sum_{k=0}^n \frac{\kappa_k^2}{\kappa_n^2} a_{k-1} [k \Phi_{k-1}(z) + B_k \Phi_{k-2}(z)]. \]  
We can use \eqref{Srec} to replace $z\Phi_{n-1}(z)$ and $z\Phi_{n-2}(z)$, to find
\begin{multline}   \label{Phin+1}
  \Phi_{n+1}'(z) = (n+1)\Phi_n(z) -n a_{n-1} \sum_{k=0}^{n-1} \frac{\kappa_k^2}{\kappa_{n-1}^2}a_{k-1} \Phi_k(z)
      + B_n \Phi_{n-1}(z) \\
  - B_n a_{n-2} \sum_{k=0}^{n-2} \frac{\kappa_k^2}{\kappa_{n-2}^2} a_{k-1} \Phi_k(z) 
      +  a_n \sum_{k=0}^n \frac{\kappa_k^2}{\kappa_n^2} k a_{k-1} \Phi_{k-1}(z)
      + a_n \sum_{k=0}^n \frac{\kappa_k^2}{\kappa_n^2} a_{k-1}B_k \Phi_{k-2}(z).
\end{multline}                              
We will now check the coefficients of $\Phi_k(z)$ on the right hand side of \eqref{Phin+1} to show that \eqref{Phider}
indeed holds for $n+1$. The coefficient of $\Phi_n(z)$ is $n+1$, which agrees with \eqref{Phider}. The coefficient of $\Phi_{n-1}(z)$ is
\[    -na_{n-1}a_{n-2} + B_n + na_na_{n-1} = na_{n-1}(a_n-a_{n-2}) + \frac{t}{2}(1-a_{n-2}^2)(1-a_{n-1}^2). \]
If we use \eqref{dPrec} for $n-1$ then this is
\begin{multline*}
      - \frac{t}{2}(1-a_{n-1}^2)(a_n+a_{n-2})(a_n-a_{n-2}) + \frac{t}{2}(1-a_{n-2}^2)(1-a_{n-1}^2) \\
    = \frac{t}{2}(1-a_{n-1}^2)(1-a_n^2) = B_{n+1}  ,  
\end{multline*}
which agrees with \eqref{Phider}.   
The coefficient of $\Phi_k(z)$ in \eqref{Phin+1} for $k \leq n-2$ is given by
\[     \frac{\kappa_k^2}{\kappa_n^2} \Bigl[ -na_{n-1}a_{k-1} \frac{\kappa_n^2}{\kappa_{n-1}^2} -B_na_{n-2}a_{k-1} \frac{\kappa_n^2}{\kappa_{n-2}^2} 
    + (k+1) a_na_k \frac{\kappa_{k+1}^2}{\kappa_k^2} + a_n a_{k+1} B_{k+2} \frac{\kappa_{k+2}^2}{\kappa_k^2} \Bigr]. \]
The expression between the square brackets simplifies to
\[  -a_{k-1} \left(\frac{na_{n-1}}{1-a_{n-1}^2} + \frac{t}{2} a_{n-2} \right) +
    a_n \left( \frac{(k+1)a_k}{1-a_k^2} + \frac{t}{2} a_{k+1} \right).  \]    
Using \eqref{dPrec} with $n-1$ in the first term and with $n=k$ in the second term then gives
\[  -a_{k-1}\left(  -\frac{t}{2} (a_n+a_{n-2}) + \frac{t}{2} a_{n-2} \right) 
    + a_n \left( -\frac{t}{2} (a_{k+1}+a_{k-1} ) + \frac{t}{2} a_{k+1} \right) = 0, \]
which agrees with \eqref{Phider}.        
\end{proof}

Let us denote the moments of the orthogonality measure $\nu$ for these orthogonal polynomials by
\[    \mu_n = \int_0^{2\pi} z^n \, d\nu(\theta), \qquad   z= e^{i\theta}. \]
We will normalize the orthogonality measure to be a probability measure, so that $\mu_0=1$. 
These moments satisfy a three term recurrence relation:

\begin{lemma}  \label{lem2}
Let $(\mu_n)_{n \geq 0}$ be the moments of the measure $\nu$ for the orthogonal polynomials $(\Phi_n)_{n \geq 0}$.
Then
\begin{equation}  \label{murec}
   (n-1)\mu_{n-1} + \frac{t}{2} (\mu_n-\mu_{n-2}) = 0.
\end{equation}
\end{lemma}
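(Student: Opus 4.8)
The plan is to recast \eqref{murec} as a single functional identity on the orthogonal polynomials and then verify that identity by feeding Lemma~\ref{lem1} and the Szeg\H{o}--Levinson recurrence into the orthogonality relations. Write $L[f]=\int_0^{2\pi}f(z)\,d\nu(\theta)$ with $z=e^{i\theta}$ for the (Laurent-polynomial) moment functional, so that $L[z^k]=\mu_k$, $\mu_0=1$ and, since $\nu$ is a positive measure, $\mu_{-k}=\overline{\mu_k}$. Because $z\frac{d}{dz}z^m=m\,z^m$ and $(z-z^{-1})z^m=z^{m+1}-z^{m-1}$, the identity
\begin{equation*}
  L\!\left[\,z\,\Phi_n'(z)+\tfrac{t}{2}(z-z^{-1})\Phi_n(z)\,\right]=0,\qquad n\geq 0, \tag{$\star$}
\end{equation*}
is exactly what is needed: every monomial $z^m$ with $m\geq 0$ is a linear combination of $\Phi_0,\dots,\Phi_m$, and the bracket in $(\star)$ is linear in its polynomial argument, so $(\star)$ yields $L[\,m z^m+\tfrac{t}{2}(z^{m+1}-z^{m-1})\,]=m\mu_m+\tfrac{t}{2}(\mu_{m+1}-\mu_{m-1})=0$, which is \eqref{murec} after the shift $m=n-1$.

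To evaluate $(\star)$ I would first record three consequences of orthogonality: $L[\Phi_k]=\delta_{k,0}$ (orthogonality to the constant $\Phi_0=1$, with $\mu_0=1$); next, feeding \eqref{Phi*} into $L$ and using $a_{-1}=-1$ and $\kappa_0=1$, only the $k=0$ term survives and gives $L[\Phi_k^*]=1/\kappa_k^2$; and finally $L[z^{-1}\Phi_n]=\int\Phi_n\overline{z}\,d\nu=0$ for $n\geq 2$, since $\Phi_n$ is orthogonal to $z$. Now expand the first term of $(\star)$ by Lemma~\ref{lem1}, $z\Phi_n'=n\,z\Phi_{n-1}+B_n\,z\Phi_{n-2}$, and replace each $z\Phi_k$ by $\Phi_{k+1}+a_k\Phi_k^*$ via \eqref{Srec2}. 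Applying $L$ kills the $\Phi_{k+1}$ terms and leaves only the $\Phi^*$ contributions, so that, using $B_n=\frac{t}{2}\kappa_{n-2}^2/\kappa_n^2$, one gets $L[z\Phi_n']=\frac{n a_{n-1}}{\kappa_{n-1}^2}+\frac{t}{2}\frac{a_{n-2}}{\kappa_n^2}$. Treating the second term the same way gives $L[(z-z^{-1})\Phi_n]=L[z\Phi_n]-L[z^{-1}\Phi_n]=a_n/\kappa_n^2$ for $n\geq 2$.

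Substituting these into $(\star)$ and clearing $\kappa_n^2$ with \eqref{kapparat} in the form $\kappa_n^2/\kappa_{n-1}^2=1/(1-a_{n-1}^2)$ reduces $(\star)$ to
\[
  n\,a_{n-1}+\tfrac{t}{2}(1-a_{n-1}^2)(a_n+a_{n-2})=0,
\]
which is precisely \eqref{dPrec} at index $n-1$ and hence holds by hypothesis. The two base cases lie outside this generic computation and must be checked separately: $n=0$ gives $\frac{t}{2}(\mu_1-\mu_{-1})=0$, true because $\mu_1=\overline{a_0}$ is real, while $n=1$ (where $B_1=0$) reduces, again via \eqref{kapparat}, to $2a_0=t(1-a_0^2)(1-a_1)$, i.e. \eqref{dPrec} at $n=0$ --- the very relation already used in the proof of Lemma~\ref{lem1}. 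I expect the main obstacle to be bookkeeping rather than conceptual: keeping track of exactly which orthogonality and normalization terms survive in $L[z\Phi_n']$ and $L[z^{-1}\Phi_n]$, and reconciling the base cases with the generic reduction so that $(\star)$, and therefore \eqref{murec}, holds uniformly for all $n$.
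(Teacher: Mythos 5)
Your proof is correct, but it takes a genuinely different route from the paper's. The paper expands $z^n=\sum_{k=0}^n c_{k,n}\Phi_k(z)$, notes $\mu_n=c_{0,n}$, differentiates, applies Lemma~\ref{lem1} and integrates so that only the $\Phi_0$-components survive, obtaining $n\mu_{n-1}=c_{1,n}+c_{2,n}B_2$; it then evaluates $c_{1,n}$ and $c_{2,n}$ explicitly as combinations of $\mu_{n-2},\mu_{n-1},\mu_n$ and invokes \eqref{dPrec} only at $n=0$ (to eliminate $a_1$). You instead prove the Pearson-type identity $L[z\Phi_n'+\tfrac{t}{2}(z-z^{-1})\Phi_n]=0$ on every basis polynomial and transfer it to monomials by linearity, which consumes \eqref{dPrec} at every index. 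All your intermediate formulas check out: $L[\Phi_k^*]=1/\kappa_k^2$ follows from \eqref{Phi*} together with $a_{-1}=-1$; $L[z\Phi_n']=na_{n-1}/\kappa_{n-1}^2+\tfrac{t}{2}a_{n-2}/\kappa_n^2$ and $L[(z-z^{-1})\Phi_n]=a_n/\kappa_n^2$ hold for $n\geq 2$; and $(\star)$ does reduce, via \eqref{kapparat}, to \eqref{dPrec} at index $n-1$, with the base cases $n=0,1$ correctly isolated (these are exactly the indices where $L[z^{-1}\Phi_n]$ does not vanish). What your version buys is a cleaner structural statement: $L$ annihilates $zf'+\tfrac{t}{2}(z-z^{-1})f$ for every polynomial $f$, which is the weak form of the Pearson equation for the weight $e^{t\cos\theta}$ that is only identified later in Section~3, so the moment recurrence appears as an instance of a uniform distributional identity rather than as an outcome of computing two particular expansion coefficients. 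The price is that you need the full Painlev\'e recurrence at all indices and two separate base cases, whereas the paper's computation, though less transparent about why the recurrence holds, gets by with the single relation $2a_0=t(1-a_0^2)(1-a_1)$.
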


\begin{proof}
Write $z^n$ in the basis $\{\Phi_k(z), 0 \leq k \leq n\}$
\[   z^n = \sum_{k=0}^n c_{k,n} \Phi_k(z), \]
with $c_{n,n}=1$, then 
\begin{equation}  \label{ckn}
  c_{k,n} = \kappa_k^2 \int_0^{2\pi} z^n \overline{\Phi_k(z)} \, d\nu(\theta).  
\end{equation}  
We then see that the orthogonality implies
\[   \mu_n = \sum_{k=0}^n c_{k,n} \int_0^{2\pi} \Phi_k(z) \, d\nu(\theta) = c_{0,n}, \]
where we used $\mu_0=1$. Taking the derivative gives
\[   n z^{n-1} = \sum_{k=1}^n c_{k,n} \Phi_k'(z), \]
and using \eqref{Phider} then leads to
\[    n z^{n-1} = \sum_{k=1}^n c_{k,n} [k\Phi_{k-1}(z) + B_k \Phi_{k-2}(z)].  \]
Integrating then gives
\[     n \mu_{n-1} = c_{1,n} + c_{2,n} B_2.  \]
The relation \eqref{ckn} then given
\[  c_{1,n} = \kappa_1^2 \int_0^{2\pi} \overline{z-a_0} z^n \, d\nu(\theta) = \kappa_1^2 (\mu_{n-1} - a_0 \mu_n), \]
and
\[   c_{2,n} = \kappa_2^2 \int_0^{2\pi} \overline{z^2-a_0(1-a_1)z-a_1} z^n\, d\nu(\theta) = 
  \kappa_2^2[\mu_{n-2}-a_0(1-a_1)\mu_{n-1} -a_1\mu_n].  \]
If we use $\kappa_2^2=\kappa_1^2/(1-a_1^2)$ and $\kappa_1^2=1/(1-a_0^2)$, then we have
\[ n\mu_{n-1} = \frac{1}{1-a_0^2} \left( \mu_{n-1} -a_0 \mu_n +\frac{t}{2}(1-a_0^2)[\mu_{n-2}-a_0(1-a_1)\mu_{n-1}-a_1\mu_n] \right).  \]
The relation \eqref{dPrec} with $n=0$ and $a_{-1}=-1$ gives
\[  a_0 = \frac{t}{2}(1-a_0^2)(1-a_1), \quad \frac{t}{2}(1-a_0^2)a_1 = \frac{t}{2}(1-a_0^2) - a_0, \]
and if we use this to remove $a_1$ from the equation, then
\begin{eqnarray*}
   n\mu_{n-1} &=& \frac{1}{1-a_0^2} \left( \mu_{n-1} - a_0 \mu_n + \frac{t}{2}(1-a_0^2) \mu_{n-2}
 - a_0^2\mu_{n-1} -\frac{t}{2} (1-a_0^2)\mu_n + a_0\mu_n \right) \\
   &=& \mu_{n-1}- \frac{t}{2} \mu_n + \frac{t}{2} \mu_{n-2},
\end{eqnarray*}
which is the recurrence relation in \eqref{murec}.
\end{proof}

\section{Proof of Theorem \ref{thm1}}
By means of Lemmas \ref{lem1} and \ref{lem2} we have now established that every solution of the discrete
Painlev\'e II equation \eqref{dPrec} with $a_{-1}=-1$ and $-1 < a_n < 1$ for every $n \geq 0$, corresponds
to the Verblunsky coefficients of orthogonal polynomials on the unit circle, for which the moments satisfy the linear
recurrence relation
\[    (n-1)\mu_{n-1} = \frac{t}{2} (\mu_{n-2}-\mu_n).   \]
The general solution of this recurrence relation is a linear combination of two linearly independent solutions.
A well known result for modified Bessel functions is that they satisfy the recurrence
\[    Z_{\nu-1}(z)-Z_{\nu+1}(z) = \frac{2\nu}{z} Z_{\nu}(z), \]
i.e., both $Z_\nu(z) = I_\nu(z)$ and $Z_\nu(z) = e^{\nu\pi i} K_\nu(z)$ are a solution of this recurrence,
see e.g., \cite{NIST} or \cite[Eq. 10.29.1]{DLMF}. This means that
\[   \mu_n = C_1 I_n(t) + C_2 (-1)^n K_n(t)  , \]
for constants $C_1$ and $C_2$. For $t$ fixed one has as $n \to \infty$ 
\[   I_n(t) \sim \frac{1}{\sqrt{2\pi n}} \left( \frac{et}{2n} \right)^n, \quad
    K_n(t) \sim \sqrt{\frac{\pi}{2n}} \left( \frac{et}{2n} \right)^{-n}, \]
 (see \cite[10.41.1 and 10.41.2]{DLMF})
so that $I_n(t)$ remains bounded as $n \to \infty$ and $K_n$ is unbounded. Since
\[  | \mu_n| = \left| \int_0^{2\pi} z^n \, d\nu(\theta) \right| \leq \int_0^{2\pi} |z^n|\, d\nu(\theta) = \mu_0, \]
it follows that we have to take $C_2=0$, so that
\[   \mu_n = C_1 I_n(t).  \]
For $n=0$ we have $\mu_0=1$ giving $C_1=1/I_0(t)$, and hence 
\[   \mu_n = \frac{I_n(t)}{I_0(t)}.  \]
There is only one measure on the unit circle with these moments. The integral representation
\[    I_n(t) = \frac{1}{\pi} \int_0^\pi e^{t\cos \theta} \cos n\theta\, d\theta, \]
(see \cite[Eq. 10.32.3]{DLMF}) shows that the measure $\nu$ is absolutely continuous with density $v(\theta) =e^{t\cos \theta}/[2\pi I_0(t)]$. This is the measure that appears in the random unitary matrix model that was investigated by Periwal and Shevitz
\cite{PeriwalShevitz}, for which they showed that the Verblunsky coefficients satisfy \eqref{dPrec}. We have now shown that this is the only measure with this property, and hence there is a unique solution of \eqref{dPrec} with $a_{-1}=-1$ and $-1 < a_n < 1$ for every $n \geq 0$.

\section{Bounds for the unique solution}   \label{sec4}
We return to the discrete Painlev\'e equation as given by \eqref{dPII}. An upper bound for the
unique solution described in Theorem \ref{thm1} is given by:

\begin{theorem}  \label{thm2}
Let $\alpha \neq 0$, then the solution of \eqref{dPII} which satisfies $x_0=1$
and $-1 < x_n < 1$ for $n \geq 1$ satisfies
\begin{equation}  \label{bound}
   |x_n| \leq \frac{2^n}{|\alpha|^n} \frac{1}{(2n-1)!!} = \frac{4^n}{|\alpha|^n} \frac{n!}{(2n)!}, \qquad n \geq 0.
\end{equation}
\end{theorem}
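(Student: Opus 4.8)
The plan is to work directly with \eqref{dPII}, exploiting that the nonlinearity enters only through the factor $1-x_n^2\in(0,1]$. For $n\ge 1$ I would rewrite \eqref{dPII} as
\[
   x_n=\frac{(1-x_n^2)(x_{n+1}+x_{n-1})}{\alpha n},
\]
and, using $0<1-x_n^2\le 1$, extract the key inequality
\[
   |x_n|\le \frac{|x_{n+1}|+|x_{n-1}|}{|\alpha|\,n},\qquad n\ge 1 .
\]
Writing $d_n=2^n/(|\alpha|^n(2n-1)!!)$ for the claimed majorant, the goal $|x_n|\le d_n$ should then be reached by a comparison argument driven by this inequality, the base case $d_0=1=|x_0|$ holding with equality.

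The next step is to verify that $d_n$ is a supersolution of the key inequality. From $d_{n+1}/d_n=2/(|\alpha|(2n+1))$ and $d_{n-1}/d_n=|\alpha|(2n-1)/2$ a one-line computation yields
\[
   d_{n+1}+d_{n-1}\le |\alpha|\,n\,d_n \iff |\alpha|^2(2n+1)\ge 4 .
\]
Thus, on the range $|\alpha|^2(2n+1)\ge 4$ the bound $|x_n|\le d_n$ propagates through the key inequality once it is known at the neighbouring indices. On the complementary range I would fall back on the a priori bound $-1<x_n<1$, which gives $|x_n|\le d_n$ for free whenever $d_n\ge 1$. Since $|\alpha|^2(2n+1)<4$ holds for only finitely many $n$, the indices left uncovered by these two observations form a finite transition zone.

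The main obstacle is precisely this transition zone: \eqref{dPII} is a second-order recurrence whose special solution is the recessive (minimal) one, so a naive forward induction is unstable, the supersolution property fails exactly for small $n$ with small $|\alpha|$, and there the trivial bound $|x_n|<1$ need not yet be sharp enough. The cleanest way to organise the estimate is through the ratio $r_n=x_{n+1}/x_n$, which satisfies the first-order relation $r_n=\alpha n/(1-x_n^2)-1/r_{n-1}$; then $|x_n|\le d_n$ for all $n$ follows from the per-term bound $|r_n|\le 2/(|\alpha|(2n+1))$, i.e. from a tail bound on the continued fraction attached to the minimal solution. To pin down the finitely many transitional indices I would bring in the explicit minimal solution of Section 3: in the shifted variables $x_{n+1}=a_n$, $\alpha=-2/t$ the moments are $\mu_n=I_n(t)/I_0(t)$, and the recurrence \eqref{murec} together with positivity gives at once the monotone ratio bound $|I_n(t)/I_{n-1}(t)|<|t|/(2n)$. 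Feeding this explicit information into a discrete maximum-principle argument for $y_n=|x_n|/d_n$ on the region $|\alpha|^2(2n+1)\ge 4$—with boundary data $y_0=1$ and $y_n\to 0$—should then close the estimate for every $n$, and I expect the careful handling of this transition zone to be the only genuinely delicate point.
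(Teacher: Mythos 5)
You correctly extract the key inequality
\[
   |x_n| \le \frac{|x_{n+1}|+|x_{n-1}|}{|\alpha|\,n}, \qquad n\ge 1,
\]
from \eqref{dPII} (using $0<1-x_n^2\le 1$), and your computation that the fixed majorant $d_n=2^n/(|\alpha|^n(2n-1)!!)$ satisfies $d_{n+1}+d_{n-1}\le|\alpha|\,n\,d_n$ only when $|\alpha|^2(2n+1)\ge 4$ is accurate. But at that point the argument stops. You yourself identify the obstruction: to bound $x_n$ you need a bound on $x_{n+1}$, so forward induction is unstable, and the supersolution property fails for small $n$ when $|\alpha|$ is small. The rescue you sketch --- the ratio recurrence $r_n=x_{n+1}/x_n$, a tail bound on the continued fraction of the minimal solution, the Bessel-ratio estimate $I_n(t)/I_{n-1}(t)<t/(2n)$, and a discrete maximum principle --- is not carried out, and it would be substantially harder than needed: the Verblunsky coefficients are not simple ratios of the moments $\mu_n=I_n(t)/I_0(t)$ (they involve Toeplitz determinants), so importing the explicit Bessel information into a pointwise bound on $x_n$ is itself a nontrivial project. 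As written, the central implication from the key inequality to $|x_n|\le d_n$ is a genuine gap.

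The paper closes this gap with an elementary bootstrap that sidesteps both of your obstacles. Instead of inducting on $n$ with a single fixed majorant, it iterates the key inequality globally: starting from $|x_n|\le b_{n,0}:=1$ for all $n\ge 0$, each pass substitutes the current uniform bound into the right-hand side, giving $|x_n|\le b_{n,k}$ for all $n\ge k$, where
\[
   b_{n,k}=\frac{b_{n+1,k-1}+b_{n-1,k-1}}{|\alpha|\,n}
   =\frac{2^k}{|\alpha|^k}\,\frac{(n-k-1)!!}{(n+k-1)!!},
\]
the closed form being a one-line induction on $k$. Because the stage-$(k-1)$ bound holds for \emph{every} index $\ge k-1$, both neighbours $n\pm1$ are already covered whenever $n\ge k$; there is no instability, no transition zone, and no condition on $|\alpha|$ ever enters. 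The diagonal choice $k=n$ yields \eqref{bound}. If you want to repair your write-up along your own lines, replace the single supersolution $d_n$ by this family of successively improving majorants; the restriction $|\alpha|^2(2n+1)\ge 4$ then becomes irrelevant and the Bessel-function input is not needed at all.
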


\begin{proof}
We will make some successive approximations and start with
\begin{equation}  \label{b0}
     |x_n| \leq 1 := b_{n,0}, \qquad n \geq 0. 
\end{equation}
From Theorem \ref{thm1} we know that $-1 < x_n < 1$ for $n \geq 1$, hence from \eqref{dPII}
we find
\[     |\alpha| n|x_n| = (1-x_n^2)|x_{n+1} + x_{n-1}| \leq |x_{n+1}| + |x_{n-1}| \]
so that
\begin{equation}  \label{upbound}
     |x_n| \leq \frac{|x_{n+1}|+ |x_{n-1}|}{|\alpha| n}, \qquad n \geq 1.
\end{equation}
If we insert our initial bound (\ref{b0}) in this inequality, then
\begin{equation}  \label{b1}
     |x_n| \leq \frac{2}{|\alpha| n} := b_{n,1}, \qquad n \geq 1.      
\end{equation}
In general, our $k$-th approximation is $|x_n| \leq b_{n,k}$ for $n \geq k$, and \eqref{upbound}
gives the recurrence relation
\begin{equation}  \label{bk}
       b_{n,k} = \frac{b_{n+1,k-1} + b_{n-1,k-1}}{|\alpha| n}, \qquad n \geq k.  
\end{equation}
We claim that
\begin{equation} \label{bnk}
    b_{n,k} = \frac{2^k}{|\alpha|^k} \frac{(n-k-1)!!}{(n+k-1)!!}, \qquad n \geq k
\end{equation}
where
\[     n!! = \begin{cases} n (n-2)(n-4)\cdots 2, & \textrm{if $n$ is even}, \\
                           n (n-2) (n-4) \cdots 1, & \textrm{if $n$ is odd}.
             \end{cases}  \]
This can easily be proved by induction on $k$. Indeed, for $k=0$ the formula \eqref{bnk} gives
$b_{n,0} = 1$, which corresponds to \eqref{b0}. For $k=1$ formula \eqref{bnk} gives
$b_{n,1} = 2/(|\alpha| n)$ for $n \geq 1$, which corresponds to \eqref{b1}. Suppose that
\eqref{bnk} is true for $k-1$, then \eqref{bk} gives for $n \geq k$
\begin{eqnarray*}     b_{n,k} &=& \frac{1}{|\alpha| n} \left( \frac{2^{k-1}}{|\alpha|^{k-1}}
                                \frac{(n-k+1)!!}{(n+k-1)!!} + \frac{2^{k-1}}{|\alpha|^{k-1}}
                                \frac{(n-k-1)!!}{(n+k-3)!!} \right) \\
                     & = & \frac{1}{|\alpha| n} \frac{2^{k-1}}{|\alpha|^{k-1}} \frac{(n-k-1)!!}{(n+k-1)!!}
                           \left( n-k+1 + n+k-1 \right)
\end{eqnarray*}
and the right hand side is indeed equal to \eqref{bnk}. We now have the bound
\[    |x_n| \leq b_{n,k} = \frac{2^k}{|\alpha|^k} \frac{(n-k-1)!!}{(n+k-1)!!}, \qquad n \geq k. \]
The bound in \eqref{bound} corresponds to the choice $k=n$.
\end{proof}

Stirling's formula gives the asymptotic behavior
\[   \frac{4^n}{|\alpha|^n} \frac{n!}{(2n)!} \sim \frac{1}{\sqrt{2}} \left( \frac{e}{|\alpha| n} \right)^n \]
which shows that $x_n$ tends to zero faster than exponentially.

\begin{verbatim}
Walter Van Assche
Department of Mathematics, KU Leuven
Celestijnenlaan 200B box 2400
BE-3001 Leuven, Belgium
walter.vanassche@kuleuven.be
\end{verbatim}

\end{document}